\begin{document}
\numberwithin{equation}{section}
\newtheorem{theorem}{Theorem}[section]
\newtheorem{lemma}[theorem]{Lemma}
\newtheorem{definition}[theorem]{Definition}
\newtheorem{remark}[theorem]{Remark}
\allowdisplaybreaks

\title[]{Some sufficient conditions on impulsive and initial value fractional order functional differential equations}
\author[G. R. Gautam , A. Dwivedi, G. Rani, \hfil \hfilneg] {Ganga Ram Gautam, Arpit Dwivedi, Gunjan Rani \\
  DST-Centre for Interdisciplinary Mathematical Sciences, Institute of Science, \\ Banaras Hindu University, Varanasi-221005
  India.}
\address{Ganga Ram Gautam, Arpit Dwivedi, Gunjan Rani: DST-Centre for Interdisciplinary Mathematical Sciences, Institute of Science, \\ Banaras Hindu University, Varanasi-221005 India.}
\email{gangacims@bhu.ac.in, arpit@bhu.ac.in, gunjan0806@bhu.ac.in}
\subjclass[2000]{26A33,34K05,34A12,34A37,26A33} \keywords{Fractional
order differential equation, Functional differential equations,
Impulsive conditions, Fixed point theorem }

\begin{abstract}
In this  paper, we will develop a definition of mild solution for
impulsive fractional differential equation of order $\alpha\in
(1,2)$ with the help of solution operator and study the existence
results of mild solution for impulsive fractional differential
equations with state dependent delay by using fixed point theorems.
Finally, we present an example of partial fractional derivative to
illustrate the existence and uniqueness result.
\end{abstract}

\maketitle

\section{Introduction}
For the past few decades,  the study of the theory of fractional
differential equations in infinite dimensional spaces have become an
important role due to their numerous application in several fields
of science and  engineering. One can see the monographs
\cite{sgaa,f-2,ksbr,ipfd,aaha,vlsl} for more detail of the
fractional calculus. On the other side differential equations with
impulsive effects are paid attention by many researchers because the
model processes which are subjected to abrupt changes are not
described by ordinary differential equations, so such type equations
can be modeled in term of impulses. The most important applications
of these equations are  in the ecology, mechanics, electrical,
medicine and biology. Fractional differential equations with
state-dependent delay normally arise in the remote control, implicit
equations like Wheeler-Feynman equations, structured populations
model which involve threshold phenomena etc. For more details one
can see the papers \cite{k1,mbfb,rpbd,k2} and references therein.

Feckan et al.\cite{mfyz} introduced a correct formula of solutions
for a impulsive Cauchy problem with Caputo fractional derivative and
 some sufficient conditions for existence of the solutions are
established by applying fixed point methods. Wang  et al.
\cite{jwmf} studied the  existence of $PC$-mild solutions for Cauchy
problems and nonlocal problems for impulsive fractional evolution
equations involving Caputo fractional derivative by utilizing the
theory of operators semigroup, probability density functions via
impulsive conditions. Dabas and Chauhan  \cite{jdace} obtained
 the existence, uniqueness and continuous dependence of mild solution for an impulsive neutral fractional
 order differential equation of order $\alpha \in (0,1)$ with infinite delay by using the fixed point technique and solution operator on a complex Banach space.
Wang \cite{jwxl} extended the problem consider in \cite{mfyz} for
order $\alpha \in (1,2).$ Shu et al. \cite{xsqw} determined the
definition of mild solution for fractional differential equations
with nonlocal conditions of order $\alpha \in (1,2)$ without
impulse. However, it should be pointed out that no work has been
reported in the existing literature regarding the existence of mild
 solution for impulsive fractional differential equation of order
$\alpha \in (1,2).$

Functional differential equations originate in several branches of
engineering, applied mathematics and science. Recently, fractional
functional differential equations with state dependent delay seems
frequently in many fields for modeling of equations such as panorama
of natural phenomena and porous media. See for more details of
relevant developments theory in the cited papers
\cite{acjd,acjdb,f-1,f-3,f-5,jdgr,acjdm,xfrh,mbfb,rpbd,sbmb,mbsl,jpmm}.

Inspired above mention works, in this paper 
we develop the definition of mild solution for impulsive fractional
differential equation of order  $\alpha \in (1,2)$ and show the
existence results for impulsive differential equation with state
dependent delay of the form:
\begin{eqnarray}
\label{p}^CD_t^\alpha u(t)&=&Au(t)+f(t,u_{\rho(t,u_t)}),\;t\in J=[0,T],\;t\neq t_k,\\
\label{c}u(t)&=&\phi(t),\;u'(t)=\varphi(t),t\in [-d,0],\\
\label{ci}\Delta u(t_k)&=&I_k(u(t_k^-)),\;\Delta u'(t_k)=Q_k(u(t_k^-)),\quad k=1,2,...m,
\end{eqnarray}
where $^CD^\alpha _t$ is the Caputo's fractional
derivative of order $\alpha \in (1,2)$ and $J$ is operational interval. $A:D(A)\subset X\rightarrow X $
is the sectorial operator defined on a complex Banach space $X$. The
 functions $f:J\times PC_0\rightarrow X ,\;\rho:J\times PC_0\to[-d,T]$ and
$\phi,\varphi,\in PC_0$ are given and satisfies some assumptions,
where $PC_0$ introduced in section $2$. The
history function $u_t:[-d,0]\rightarrow X$ is
 defined by $ u_t(\theta)=u(t+\theta),\; \theta \in [-d, 0]$
belongs to $PC_0$. Here $0\leq
t_0<t_1<...<t_m<t_{m+1}\leq T<\infty,$ the functions $I_k,Q_k\in C(X,X),\;k=1,
2, ...m,$ are bounded. We have
 $\triangle u(t_k)=u(t_k^+)-u(t_k^-)$ where $u(t_k^+)$ and $u(t_k^-)$
represent the right and left- hand limits of $u(t)\, \mbox{at}\,
t=t_k$ respectively, also we take $u(t_i^-)=u(t_i)$ and $\triangle u'(t_k)=u'(t_k^+)-u'(t_k^-)$ where $u'(t_k^+)$ and $u'(t_k^-)$
represent the right and left- hand limits of $u(t)\, \mbox{at}\,
t=t_k,$ also we take $u'(t_i^-)=u'(t_i)$ respectively.

For further details, this work has four sections, second section
provides some basic definitions, preliminaries, theorems and lemmas.
Third section is equipped with main results for the considered
problem (\ref{p})-(\ref{ci}) and  fourth section has  an example.

\section{Preliminaries and back ground martials}
Let $(X,\|\cdot\|_X)$ be a complex Banach space of functions with the norm $\|u\|_{X}=\sup_{t\in J} \{|u(t)|:u\in X\}$ and $L(X)$ denotes the Banach space of bounded linear operators from $X$ into $X$ equipped with norm is denoted by $\|.\|_{L(X)}.$

Let $PC_0=C([-d,0], X)$ (with $[-d,0]\subset \mathbb{R}$) denotes
the space formed by all the continuous functions defined from
$[-d,0]$ into $X,$ endowed with the norm  $\|u(t)\|_{C([-d,0],
X)}=\sup_{t\in [-d,0]}\|u(t)\|_X.$

 To study the impulsive conditions, we consider
$PC^2_t=PC([-d,t];X),\; 0< t\leq T<\infty$ be a Banach space of all
 functions $u:[-d,T]\to X,$ which have $2-$times continuously
differentiable on $[0,T]$ except for a finite number of points
$t_i\in (0,T),\;i=1,2,\dots,\mathbb{N},$ at which $u'(t_i^+)$ and
$u'(t_i^-)=u'(t_i)$ exists and endowed with the norm
$\|u\|_{PC_t^2}=\sup_{t\in [-d,T]}\sum_{j=0}^{2}\left\{\|u^j(t)\|_X,
u\in PC_t^2\right\}.$

\begin{definition}\label{cpto}\rm
Caputo's derivative of order $\alpha>0$ with lower limit $a,$ for a function $f:
[a,\infty)\to \mathbb{R}$ such that $f\in C^n([a,\infty),X)$ is defined as
\begin{eqnarray*}
_aD^\alpha_tf(t)={1\over
\Gamma(n-\alpha)}\int_a^t(t-s)^{n-\alpha-1}f^{(n)}(s)ds=_aJ_t^{n-\alpha}f^{(n)}(t),
\end{eqnarray*}
where $a\geq0,\;n-1<\alpha<n,\;n\in \mathbb{N}.$
\end{definition}

\begin{definition}\label{remn}\rm
The Riemann-Liouville fractional integral operator of order $\alpha
> 0$ with lower limit $a,$ for a continuous function $f:
[a,\infty)\to \mathbb{R}$ is defined by
\begin{eqnarray*} \label{e2.1}
_aJ_t^0f(t)=f(t),\;_aJ_t^{\alpha}f(t)={1\over
\Gamma(\alpha)}\int_a^t(t-s)^{\alpha-1}f(s)ds,\quad \;t>0,
\end{eqnarray*}
where $a\geq0$ and $\Gamma(\cdot)$ is the Euler gamma function.
\end{definition}

\begin{definition}(\cite{xsqw})\label{def2.3} \rm
Let $A:D(A)\subseteq X\to X$ be a densely defined, closed and linear
operator in $X$. $A$ is said to be sectorial of the type
$(M,\theta,\alpha,\mu)$ if there exist  $\mu \in \mathbb{R},\;
\theta\in({\pi\over2},\pi),\; M>0,$ such that  the
$\alpha$-resolvent of $A$ exists outside the sector and following
two conditions are satisfied:
\begin{itemize}
\item[(1)] $\mu+S_{\theta}=\{\mu+\lambda^{\alpha}:\lambda\in \mathbb{C},\; |Arg(-\lambda^{\alpha})|<\theta\},$
 \item[(2)]$\|(\lambda^{\alpha}I-A)^{-1}\|_{L(X)}\leq{M\over{|\lambda^{\alpha}-\mu|}}, \;\lambda \notin \mu+S_{\theta},$
\end{itemize}
where X is the complex Banach space with norm denoted
$\|.\|_{X}$.
 \end{definition}

\begin{definition}\label{def2.3} \rm
A two parameter function of the Mittag-Lefller type is defined by the series expansion
\begin{eqnarray*}
E_{\alpha,\beta}(y)=\sum_{k=0}^{\infty}{y^k \over\Gamma(\alpha
k+\beta)}=
{1\over2\pi i}\int_c{{\mu}^{\alpha-\beta}e^{\mu}\over{\mu}^{\alpha}-y}d\mu,\;\alpha,\;\beta>0,\;y\in
\mathbb{C},
\end{eqnarray*}
where $c$ is a contour which starts and ends at $-\infty$ and encircles the disc $|\mu|\leq|y|^{1\over\alpha}$ counter clockwise.
The Laplace integral of this function given by
\begin{eqnarray*}
\int_0^{\infty}e^{-\lambda t}t^{\beta-1}E_{\alpha,\beta}(\omega
t^{\alpha})dt=
{{\lambda}^{\alpha-\beta}\over{\lambda}^{\alpha}-\omega},\;
Re\lambda>{\omega}^{{1\over\alpha}},\; \omega>0.
\end{eqnarray*}
Let $A$ be positive definite operator which is linear and closed then Laplace integral of Mittag-Lefller function
\begin{eqnarray*}
\int_0^{\infty}e^{-\lambda t}t^{\beta-1}E_{\alpha,\beta}(A t^{\alpha})dt={{\lambda}^{\alpha-\beta}\over{\lambda}^{\alpha}I-A},\;
Re\lambda>{A}^{{1\over\alpha}}.
\end{eqnarray*}
\end{definition}

\begin{definition}\label{def2.6} \rm
 Let $A:D(A)\subset X\to X$ be a closed and linear operator and $\alpha,\beta>0.$ We say that $A$ is the generator of  $(\alpha,\beta)$ operator function if there exists $\omega \geq 0$ and a strongly continuous
function $W_{\alpha,\beta}:\mathbb{R}^+\rightarrow L(X)$ such that
 $\{{\lambda}^{\alpha}:Re\lambda>\omega\}\subset\rho(A)$ and
$${\lambda}^{\alpha-\beta}({\lambda}^{\alpha}I-A)^{-1}u=\int_0^{\infty}e^{-\lambda
t}W_{\alpha,\beta}(t)udt,\; Re\lambda>\omega,\; u\in X.$$
Here $W_{\alpha,\beta}(t)$ is called the operator function
generated by $A.$
\end{definition}

\begin{remark}\rm
The operator function $W_{\alpha,\beta}(t)$ is general case of $\alpha$-resolvent family and solution operator. In case $\beta=1,$ operator function correspond to solution operator  $S_{\alpha}(t)$ by definition 2.1 in \cite{rpbd}, whereas in the case $\beta=\alpha,$ operator function correspond to $\alpha$-resolvent family defined in \cite{dacl} in definition (2.3) and operator function correspond to $K_{\alpha}(t)$ in \cite{xsqw} if case $\beta=2.$
\end{remark}

\begin{lemma} \rm
Let the function $f$ continuous and $A$ is a sectorial operator of the type $(M,\theta,\alpha,\mu)$. Consider differential equation of order $\alpha\in(1,2)$
\begin{eqnarray}\label{ME}
^CD_t^\alpha u(t)&=&Ay(t)+f(t),\;t\in J=[0,T],t\neq t_k,\\
\label{con}u(0)&=&u_0\in X,\;u'(0)=u_1\in X,\\
\label{ipm}\Delta u(t_k)&=&I_k(u(t_k^-)),\Delta u'(t_k)=Q_k(u(t_k^-)),\;t\neq t_k,\quad k=1,2,...m.
\end{eqnarray}
Then a function $u(t)\in PC^2([0,T],X)$
 is a solution of the system (\ref{ME})-(\ref{ipm})
if it satisfies the following integral equation
 \begin{eqnarray*}
 u(t)=\begin{cases}S_{\alpha }(t)u_0+K_{\alpha }(t)u_1+\int_0^tT_{\alpha }(t-s)f(s)ds,& t\in (0,t_1]\\
 S_{\alpha }(t)u_0+K_{\alpha }(t)u_1+\sum _{i=1}^kS_{\alpha }(t-t_i)I_i(u(t_i^-))\\
+\sum _{i=1}^kK_{\alpha }(t-t_i)Q_i(u(t_i^-))+\int_0^tT_{\alpha }(t-s)f(s)ds,& t\in (t_k,t_{k+1}],\label{p1eq7}
 \end{cases}
 \end{eqnarray*}
where $S_{\alpha}(t),K_{\alpha}(t),T_{\alpha}(t)$ are operators generated by $A$ and defined as
\begin{eqnarray*}
&&S_\alpha(t)=\frac{1}{2\pi i}\int_{\Gamma}e^{\lambda t}\lambda^{\alpha-1}(\lambda^\alpha I-A)^{-1}d\lambda;\;K_\alpha(t)=\frac{1}{2\pi i}\int_{\Gamma}e^{\lambda t}\lambda^{\alpha-2}(\lambda^\alpha I-A)^{-1}d\lambda,\\
&&T_\alpha(t)=\frac{1}{2\pi i}\int_{\Gamma}e^{\lambda
t}(\lambda^\alpha I-A)^{-1}d\lambda
\end{eqnarray*}
with $\Gamma$ is a suitable path such that
$\lambda^{\alpha}\not\in\mu+S_{\theta}$ for $\lambda\in \Gamma.$
\end{lemma}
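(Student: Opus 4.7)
The strategy is a Laplace transform argument carried out piecewise on the subintervals determined by the impulse points, together with an inductive bookkeeping of the jumps at each $t_k$. On the first subinterval $(0,t_1]$ the problem is an ordinary (non-impulsive) inhomogeneous Cauchy problem, so I would begin by applying the Laplace transform $\widehat{u}(\lambda)=\int_0^{\infty}e^{-\lambda t}u(t)\,dt$ to equation (\ref{ME}). Using the standard identity
\begin{equation*}
\mathcal{L}\bigl\{{}^{C}D_t^{\alpha}u(t)\bigr\}(\lambda)=\lambda^{\alpha}\widehat{u}(\lambda)-\lambda^{\alpha-1}u(0)-\lambda^{\alpha-2}u'(0),\qquad 1<\alpha<2,
\end{equation*}
together with the initial data in (\ref{con}), the transformed equation can be solved algebraically on the resolvent set of $A$:
\begin{equation*}
\widehat{u}(\lambda)=\lambda^{\alpha-1}(\lambda^{\alpha}I-A)^{-1}u_{0}+\lambda^{\alpha-2}(\lambda^{\alpha}I-A)^{-1}u_{1}+(\lambda^{\alpha}I-A)^{-1}\widehat{f}(\lambda).
\end{equation*}
Inverting via the Bromwich contour $\Gamma$ (which is admissible because $A$ is sectorial of type $(M,\theta,\alpha,\mu)$, so the integrand decays sufficiently off the sector) and using the convolution theorem for the inhomogeneous term, one recovers exactly $S_{\alpha}(t)u_{0}+K_{\alpha}(t)u_{1}+\int_{0}^{t}T_{\alpha}(t-s)f(s)\,ds$, which is the first branch of the claimed formula.

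For $t\in(t_{k},t_{k+1}]$, the plan is to proceed by induction on $k$. Given the representation valid up to $t_{k}$, I would compute the one-sided limits $u(t_{k}^{+})=u(t_{k}^{-})+I_{k}(u(t_{k}^{-}))$ and $u'(t_{k}^{+})=u'(t_{k}^{-})+Q_{k}(u(t_{k}^{-}))$ from the impulsive conditions (\ref{ipm}), then solve the fractional Cauchy problem on the shifted interval with these as new initial data. Applying the first-interval formula to the shifted problem and using the translation properties of the contour-defined families $S_{\alpha},K_{\alpha},T_{\alpha}$ (which follow from their representations as inverse Laplace transforms), each pair of impulses at $t_{i}$, $i\le k$, contributes an additional summand $S_{\alpha}(t-t_{i})I_{i}(u(t_{i}^{-}))+K_{\alpha}(t-t_{i})Q_{i}(u(t_{i}^{-}))$. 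Collecting these across $i=1,\dots,k$ and keeping the original $S_{\alpha}(t)u_{0}+K_{\alpha}(t)u_{1}+\int_{0}^{t}T_{\alpha}(t-s)f(s)\,ds$ piece produces the second branch of the formula.

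The delicate point, and the main obstacle, is that the Caputo derivative is nonlocal in $t$: on $(t_{k},t_{k+1}]$ the quantity ${}^{C}D_t^{\alpha}u(t)$ involves an integral over all of $[0,t]$ and therefore sees the jumps that have already occurred. One must therefore be careful to interpret the equation in the sense that gives rise to the impulsive conditions (\ref{ipm}), either by viewing $^CD_t^{\alpha}u$ with the lower limit reset to $t_{k}$ on each subinterval, or by inserting the jump distributions into the Riemann--Liouville integral and tracking the resulting boundary contributions. I would verify that the ``reset lower limit'' interpretation reproduces the impulsive conditions by direct substitution: evaluating the proposed candidate $u$ at $t_{k}^{+}$ and its left limit at $t_{k}^{-}$ and checking that $u(t_{k}^{+})-u(t_{k}^{-})=I_{k}(u(t_{k}^{-}))$ and the analogue for $u'$, which is where the operator identities $S_{\alpha}(0)=I$, $K_{\alpha}(0)=0$, $K_{\alpha}'(0)=I$ and appropriate regularity of $T_{\alpha}$ enter decisively.

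Finally, I would close the argument by checking that the piecewise-defined function so constructed lies in $PC^{2}([0,T],X)$, satisfies (\ref{ME}) classically on each open subinterval (by differentiating under the contour integrals, using the sectorial estimate in Definition~\ref{def2.3} to justify interchange), and matches the prescribed initial data (\ref{con}) through the same limits $S_{\alpha}(0)=I$, $K_{\alpha}(0)=0$, $K_{\alpha}'(0)=I$.
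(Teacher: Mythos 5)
Your treatment of the first subinterval is fine and agrees in substance with the paper (transforming the Caputo equation directly is equivalent to transforming the Volterra integral equation the paper writes down), but the induction step for $t\in(t_k,t_{k+1}]$ contains a genuine gap. You propose to restart the Cauchy problem at $t_k$ with the updated data $u(t_k^+),u'(t_k^+)$ and then invoke ``translation properties'' of $S_\alpha,K_\alpha,T_\alpha$ to reassemble the global formula. No such properties are available: for $\alpha\in(1,2)$ these contour-defined families are neither semigroups nor cosine families, so in general $S_\alpha(t-t_k)S_\alpha(t_k)u_0\neq S_\alpha(t)u_0$. What the restart actually yields is
\begin{equation*}
u(t)=S_\alpha(t-t_k)\,u(t_k^+)+K_\alpha(t-t_k)\,u'(t_k^+)+\int_{t_k}^{t}T_\alpha(t-s)f(s)\,ds,
\end{equation*}
which is a different function from the one asserted in the lemma; the two coincide only in the classical cases. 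This is exactly the ``which formula is correct'' issue for impulsive fractional equations raised in \cite{mfyz}, and your argument as written would establish a different representation, not the stated one.

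The paper avoids the restart entirely. It keeps the lower limit of the Caputo derivative at $0$ on every subinterval, invokes Lemma 3.1 of \cite{jwxl} to replace the impulsive problem by a single Volterra integral equation on $(0,T]$ in which each impulse enters additively as $\chi_{t_i}(t)\left[I_i(u(t_i^-))+(t-t_i)Q_i(u(t_i^-))\right]$, and then applies one global Laplace transform to that equation. The factors $S_\alpha(t-t_i)$ and $K_\alpha(t-t_i)$ in the final formula arise from the frequency-domain shift $e^{-\lambda t_i}$ produced by the Heaviside factors, i.e.\ from the second shifting theorem, not from re-initialising the dynamics at $t_i$. Your second, undeveloped alternative (``inserting the jump distributions into the Riemann--Liouville integral and tracking the boundary contributions'') is the route that actually works; to repair the proof you should carry that out, or simply quote the integral-equation form of the impulsive solution and transform it globally as the paper does.
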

\begin{proof} If $t\in (0,t_1],$ we have following problem
\begin{eqnarray}
\label{p1eq}&^CD_t^\alpha u(t)=Au(t)+f(t),\\
& \label{icon}u(0)=u_0,\;u'(0)=u_1.
\end{eqnarray}
By Lemma $3.1$ in \cite{jwxl}, the solution of Eq.
(\ref{p1eq})-(\ref{icon}) can be written as
\begin{eqnarray}
\label{int1}u(t)=u_0+u_1t+\int_0^t{(t-s)^{\alpha -1}\over\Gamma (\alpha )}Au(s)ds+\int_0^t{(t-s)^{\alpha -1}\over\Gamma (\alpha )}f(s)ds.
\end{eqnarray}
If $ t\in (t_k,t_{k+1}],k=1,2,...m,$ we have the following equations
\begin{eqnarray}
\label{p1eq1}^CD_t^\alpha u(t)&=&Au(t)+f(t),\\
 u(t_k^+)&=&u(t_k^-)+I_k(u(t_k^-)),\\
\label{icon1} u'(t_k^+)&=&u'(t_k^-)+Q_k(u(t_k^-)).
\end{eqnarray}
By Lemma $3.1$ in \cite{jwxl} the solution of Eq.
(\ref{p1eq1})-(\ref{icon1}),  can be written as
\begin{eqnarray}
\nonumber u(t)&=&u_0+u_1t+\sum _{i=1}^kI_i(u(t_i^-))+\sum _{i=1}^kQ_i(u(t_i^-))(t-t_i)\\
&&+\int_0^t{(t-s)^{\alpha -1}\over\Gamma \alpha} Au(s)ds+\int_0^t{(t-s)^{\alpha -1}\over\Gamma (\alpha )}f(s)ds.\label{p1eq2}
\end{eqnarray}
Summarizing  Eq. (\ref{int1}) and Eq. (\ref{p1eq2}) to $t\in (0,T],$ we get
\begin{eqnarray}
\nonumber u(t)&=&u_0+u_1t+\sum _{i=1}^m\chi_{t_i}(t)I_i(u(t_i^-))+\sum _{i=1}^m\chi_{t_i}(t)Q_i(u(t_i^-))(t-t_i)\\
&&+\int_0^t{(t-s)^{\alpha -1}\over\Gamma \alpha} Au(s)ds+\int_0^t{(t-s)^{\alpha -1}\over\Gamma (\alpha )}f(s)ds,\label{p1eq3}
\end{eqnarray}
where $\chi_{t_i}(t)=\begin{cases}0\qquad t\leq  t_i\\
 1\qquad t>t_i.\end{cases}$

By taking the Laplace transformation on Eq. (\ref{p1eq3}), we have
\begin{eqnarray}
\nonumber L\{u(t)\}&=&{u_0\over\lambda }+{u_1\over\lambda^2 }+\sum _{i=1}^m{e^{-\lambda t_i}\over\lambda }I_i(u(t_i^-))+\sum _{i=1}^m{e^{-\lambda t_i}\over\lambda^2 }Q_i(u(t_i^-))\\
&&+{A\over\lambda ^{\alpha }}L\{u(t)\}+{1\over\lambda ^{\alpha }}L\{f(t)\}.\label{gg}
\end{eqnarray}
On simplifying Eq. (\ref{gg}), we get
\begin{eqnarray} \nonumber L\{u(t)\}&=&{\lambda ^{\alpha -1}(u_0)\over{(\lambda ^{\alpha }I-A)}}+{\lambda ^{\alpha -2}(u_1)\over{(\lambda ^{\alpha }I-A)}}+\sum _{i=1}^m{\lambda ^{\alpha -1}\over{(\lambda ^{\alpha }I-A)}}e^{-\lambda t_i}I_i(u(t_i^-))\\
&&+\sum _{i=1}^m{\lambda ^{\alpha -2}\over{(\lambda ^{\alpha }I-A)}}e^{-\lambda t_i}Q_i(u(t_i^-))+{1\over{(\lambda ^{\alpha }I-A)}}L\{f(t)\}.\label{p1eq5}
\end{eqnarray}
Now, taking the inverse Laplace transformation of Eq. (\ref{p1eq5}), we have
\begin{eqnarray}
\nonumber u(t)&=&S_{\alpha }(t)u_0+K_{\alpha }(t)u_1+\sum _{i=1}^mS_{\alpha }(t-t_i)\chi_{t_i}(t)I_i(u(t_i^-))\\
&&\nonumber+\sum _{i=1}^mK_{\alpha }(t-t_i)\chi_{t_i}(t)Q_i(u(t_i^-))+\int_0^tT_{\alpha }(t-s)f(s)ds,\label{pleq6}\hspace{.75in} t\in J.\label{p1eq7}
\end{eqnarray}
This complete the proof of the lemma.\end{proof}

Now, we state the definition of mild solution of problem (\ref{p})-(\ref{ci}).
\begin{definition}\rm
A function $u:(-\infty,T]\to X$ such that $u\in PC_t^2,
u(0)=\phi(0),u'(0)=\varphi(0),$ is called a mild solution of problem
(\ref{p})-(\ref{ci}) if it satisfies the following integral equation
\begin{eqnarray*}
 u(t)=\begin{cases}S_{\alpha }(t)\phi(0)+K_{\alpha }(t)\varphi(0)+\int_0^tT_{\alpha }(t-s)f(s,u_{\rho(s,u_s)})ds,& t\in (0,t_1]\\
 S_{\alpha }(t)\phi(0)+K_{\alpha }(t)\varphi(0)+\sum _{i=1}^kS_{\alpha }(t-t_i)I_i(u(t_i^-))\\
+\sum _{i=1}^kK_{\alpha }(t-t_i)Q_i(u(t_i^-))+\int_0^tT_{\alpha }(t-s)f(s,u_{\rho(s,u_s)})ds,& t\in (t_k,t_{k+1}].\label{p1eq7}
 \end{cases}
 \end{eqnarray*}
\end{definition}

For the analysis and to prove the results, we use following fixed point theorems.
\begin{theorem}[Banach fixed point theorem]\label{baft}\rm
Let $C$ be a closed subset of a Banach space $X$ and let $f$ be contraction mapping from $C$ in to $C.$ i.e.
$$\|f(y)-f(z)\|\leq \delta\|y-z\|\;\;\forall\;y,z\in C;\;0<\delta<1.$$
Then there exists a unique $z\in C$ such that $f(z)=z.$
\end{theorem}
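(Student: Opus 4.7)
The plan is to use the classical Picard iteration argument. I would start by picking an arbitrary point $z_0 \in C$ and defining the sequence $z_{n+1} = f(z_n)$; because $f$ maps $C$ into $C$, the entire sequence lies in $C$. The strategy is to show $\{z_n\}$ is Cauchy, then invoke completeness of $X$ together with closedness of $C$ to extract a limit inside $C$, and finally verify that the limit is a fixed point and that it is unique.

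The first key step is the estimate $\|z_{n+1} - z_n\| \leq \delta^n \|z_1 - z_0\|$, which follows by induction from the contraction hypothesis applied to consecutive iterates. Combining this with the triangle inequality and summing a geometric series would give, for $m > n$,
\begin{equation*}
\|z_m - z_n\| \leq \sum_{k=n}^{m-1}\delta^k \|z_1 - z_0\| \leq \frac{\delta^n}{1-\delta}\|z_1 - z_0\|,
\end{equation*}
so $\{z_n\}$ is Cauchy. Since $X$ is a Banach space and $C$ is closed, there exists $z \in C$ with $z_n \to z$.

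Next I would pass to the limit in the recursion $z_{n+1} = f(z_n)$; the contraction property trivially implies continuity of $f$ (indeed Lipschitz continuity with constant $\delta$), so $f(z_n) \to f(z)$, which yields $f(z) = z$. For uniqueness, I would suppose $w \in C$ is another fixed point; then
\begin{equation*}
\|z - w\| = \|f(z) - f(w)\| \leq \delta \|z - w\|,
\end{equation*}
and since $0 < \delta < 1$ this forces $\|z-w\| = 0$, hence $w = z$.

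There is no real obstacle here, since this is the textbook proof of a classical theorem; the only point requiring mild care is making sure the iterates remain in the closed set $C$ (immediate from $f:C\to C$) and that the Cauchy estimate is stated uniformly in $m \geq n$ so that Cauchyness is unambiguous. Everything else is a direct consequence of the contraction inequality.
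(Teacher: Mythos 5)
Your proof is correct and complete: the Picard iteration, the geometric-series Cauchy estimate, the use of completeness of $X$ plus closedness of $C$ to keep the limit in $C$, and the uniqueness argument are all exactly the standard argument for this classical result. Note that the paper itself states this theorem without proof, as a known tool to be applied later (in Theorem \ref{cnt}), so there is no in-paper argument to compare against; your write-up supplies the textbook proof correctly.
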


\begin{theorem}[Krasnoselkii's fixed point theorem] \label{kraski}\rm
Let $B$ be a closed convex and nonempty subset of a Banach space
$X$. Let $P$ and $Q$ be two operators such that
\begin{itemize}
\item[(i)] $Px+Qy\in B$, whenever $x,y\in B$.
\item[(ii)](ii) $P$ is compact and continuous.
\item[(iii)] $Q$ is a contraction mapping.
\end{itemize}
Then there exists $z\in B$ such that $z=Pz+Qz$.
\end{theorem}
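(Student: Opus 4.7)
The plan is to combine the Banach fixed point theorem (already in hand as Theorem \ref{baft}) with Schauder's fixed point theorem via the classical two-step reduction due to Krasnoselskii. The key algebraic observation is that the equation $z = Pz + Qz$ can be rewritten as $(I-Q)z = Pz$, i.e.\ $z = (I-Q)^{-1}Pz$. Since $Q$ is a contraction, $(I-Q)$ will be invertible with Lipschitz inverse, and one then produces a fixed point of the composite $h := (I-Q)^{-1}\circ P$ on $B$ by an appeal to Schauder's theorem, which is where hypothesis (ii) enters.

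First I would construct the auxiliary map $h$. For each fixed $y \in B$ define $\Phi_y : B \to X$ by $\Phi_y(x) := Qx + Py$. By hypothesis (i), with the roles of the free variables interchanged (permissible since both range over all of $B$), one has $\Phi_y(x) \in B$ for every $x \in B$, so $\Phi_y$ is a self-map of the closed set $B$. The contraction hypothesis (iii) on $Q$ with some constant $\delta \in (0,1)$ immediately yields $\|\Phi_y(x_1) - \Phi_y(x_2)\| \le \delta\|x_1 - x_2\|$, so $\Phi_y$ is a contraction on $B$. Theorem \ref{baft} then supplies a unique $x_y \in B$ with $x_y = Qx_y + Py$, and I would set $h(y) := x_y$. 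Note that any fixed point of $h$ on $B$ automatically satisfies $z = Qz + Pz$, so it remains only to find one.

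Second I would verify the hypotheses of Schauder's fixed point theorem for $h : B \to B$. Subtracting the defining equations $h(y_i) = Qh(y_i) + Py_i$ for $i=1,2$ and using the contraction bound for $Q$ gives
\begin{equation*}
\|h(y_1) - h(y_2)\| \le (1-\delta)^{-1}\|Py_1 - Py_2\|.
\end{equation*}
Continuity of $h$ then follows from the continuity half of hypothesis (ii), and relative compactness of $h(B)$ follows from this Lipschitz estimate together with the compactness of $P(B)$ guaranteed by hypothesis (ii), since a Lipschitz map sends relatively compact sets to relatively compact sets. Schauder's fixed point theorem now produces $z \in B$ with $h(z) = z$, whence $z = Pz + Qz$.

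The main conceptual step—indeed essentially the entire content of the theorem—is the inversion trick that packages $h$ as $(I-Q)^{-1}\circ P$: the contraction hypothesis on $Q$ is exactly what makes this inverse exist and be Lipschitz, while the compactness of $P$ is exactly what upgrades the composite to a compact continuous self-map on the convex closed set $B$. A minor technical caveat is that Schauder's theorem itself is not stated in the excerpt, so if strict self-containment is required one would need to cite it as a standard tool from nonlinear functional analysis; beyond that, the proof is essentially bookkeeping once the reduction is in place.
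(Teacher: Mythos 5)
The paper does not prove this statement at all: Krasnoselskii's fixed point theorem is quoted in Section 2 as a standard tool (alongside the Banach and Leray--Schauder theorems) and is used as a black box in Theorem \ref{Kras}. Your proof is the classical argument for it, and it is correct: hypothesis (i) (with the two free variables swapped, which is legitimate since both range over all of $B$) makes $x\mapsto Qx+Py$ a contractive self-map of the closed set $B$, Theorem \ref{baft} yields the well-defined map $h(y)=x_y$, the subtraction argument gives the Lipschitz bound $\|h(y_1)-h(y_2)\|\le(1-\delta)^{-1}\|Py_1-Py_2\|$, and this transfers continuity and relative compactness from $P$ to $h$ so that Schauder's theorem applies on the closed convex set $B$. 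The only dependency beyond what the paper records is Schauder's theorem itself, which you correctly flag; that is unavoidable in any proof of this result and is not a gap.
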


\begin{theorem}[Nonlinear Leray-Schauder Alternative]\label{laray} \rm
Let $D$ be a closed convex subset of Banach space $X$ and assume that $0\in D .$ Let $ P:D\rightarrow D$ be a completely continuous mapping. Then, either the set $\{y \in D:y=\lambda P (y), \lambda \in (0,1)\}$ is unbounded or the map $P $ has a fixed point in $D.$
\end{theorem}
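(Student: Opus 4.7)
The plan is to reduce the statement to the classical Schauder fixed point theorem by means of a radial retraction and a scaling argument. Assume the dichotomy fails on the unbounded side, i.e.\ the set $\mathcal{S}=\{y\in D:y=\lambda P(y),\;\lambda\in(0,1)\}$ is bounded; I would then produce a fixed point of $P$ in $D$, proving the stated alternative.

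First I would pick $R>0$ with $\|y\|<R$ for every $y\in\mathcal{S}$. Since $0\in D$ and $D$ is convex, the radial retraction $r:X\to\overline{B}_R$ defined by $r(y)=y$ if $\|y\|\le R$ and $r(y)=Ry/\|y\|$ otherwise maps $D$ into $D\cap\overline{B}_R$: indeed $Ry/\|y\|$ is a convex combination of $0\in D$ and $y\in D$, so it lies in $D$. Next I would form the composition $\widetilde{P}=r\circ P:D\cap\overline{B}_R\to D\cap\overline{B}_R$. Complete continuity of $P$ together with continuity of $r$ gives that $\widetilde{P}$ is continuous and $\widetilde{P}(D\cap\overline{B}_R)$ is relatively compact in the closed, bounded, convex set $D\cap\overline{B}_R$. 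Schauder's fixed point theorem therefore supplies $y^{*}\in D\cap\overline{B}_R$ with $y^{*}=\widetilde{P}(y^{*})$.

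Now comes the dichotomy that finishes the argument. If $\|P(y^{*})\|\le R$, then $\widetilde{P}(y^{*})=P(y^{*})$ and hence $y^{*}=P(y^{*})$ is the desired fixed point of $P$. Otherwise $\|P(y^{*})\|>R$ and
\[
y^{*}=\frac{R}{\|P(y^{*})\|}\,P(y^{*})=\lambda P(y^{*}),\qquad \lambda=\frac{R}{\|P(y^{*})\|}\in(0,1),
\]
so $y^{*}\in\mathcal{S}$. By the choice of $R$ this forces $\|y^{*}\|<R$, while the explicit formula gives $\|y^{*}\|=R$, a contradiction. Hence only the first alternative can occur, and $P$ has a fixed point in $D$.

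The main obstacle, and the step deserving the most care, is the construction of the retraction $r$: one must use both the convexity of $D$ and the hypothesis $0\in D$ to ensure that $r$ sends $D$ back into $D$ (without these one only lands in $\overline{B}_R$, and Schauder's theorem no longer applies within $D$). A secondary delicate point is that the contradiction in the last step relies on the \emph{strict} inequalities $\lambda\in(0,1)$ and $\|y\|<R$; had $\mathcal{S}$ been defined with closed endpoints, the scaled point could sit on the boundary sphere and no contradiction would arise. Everything else is routine: complete continuity passes through composition with a bounded continuous retraction, and Schauder's theorem does the heavy lifting.
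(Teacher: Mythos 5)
Your proof is correct. Note, however, that the paper does not prove Theorem \ref{laray} at all: it is stated without proof as a standard background tool (alongside the Banach and Krasnoselskii fixed point theorems) and is only \emph{applied} later, so there is no in-paper argument to compare against. Your derivation is the classical one — assume the solution set $\mathcal{S}$ is bounded, compose $P$ with the radial retraction onto $\overline{B}_R$ (which stays inside $D$ precisely because $0\in D$ and $D$ is convex), apply Schauder's theorem on the closed bounded convex set $D\cap\overline{B}_R$, and rule out the case $\|P(y^{*})\|>R$ by observing that the resulting point would lie in $\mathcal{S}$ yet sit on the sphere of radius $R$, contradicting the strict bound $\|y\|<R$. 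All the delicate points you flag (membership of the retracted point in $D$, preservation of complete continuity under composition with $r$, and the strictness of the inequalities) are handled correctly, so the argument stands as a complete proof of the stated alternative.
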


\section{Existence Result of mild solution}
In this section, we prove the existence of mild solutions for
(\ref{p})-(\ref{ci}) with a non-convex valued right-hand side. If
$A$ is sectorial operator then the strongly continuous function
$\|W_{\alpha,\beta}(t)\|_{L(X}\leq M.$ We have
$\|S_{\alpha}(t)\|\leq M;\|K_{\alpha}(t)\|\leq
M;\|T_{\alpha}(t)\|\leq M.$ To prove our results,  we shall assume
the function $ \rho $  is continuous. Our result is based on
contraction fixed point theorem for this we have following
assumptions
\begin{enumerate}
\item[$(H_1)$] $f$ is continuous and there exists $l_{f}\in L^1(J,\mathbb{R}^+)$ such that
$$\|(f(t,\psi)-f(t,\xi))\|_X\leq l_f(t)\|\psi-\xi\|_{PC_0}\;\mbox{for every}\; \psi,\xi\in PC_0.$$
\item[$(H_2)$] The functions $I_k,Q_k:X\to X,$ are continuous and
there exists $l_{i},l_{j}\in L^1(J,\mathbb{R}^+)$ such that
$$\|I_k(x)-I_k(y)\|_X\leq l_i(t)\|x-y\|_X;\|Q_k(x)-Q_k(y)\|_X\leq l_j(t)\|x-y\|_X,$$ for all $x,y\in X$ and $k=1,\dots,m.$
\end{enumerate}

\begin{theorem}\rm\label{cnt}
Let the assumption $(H_1)$ and $(H_2)$ hold and the constant
$$\Delta=M\left[m\|l_i\|_{L^1(J,\mathbb{R}^+)}+m\|l_j\|_{L^1(J,\mathbb{R}^+)}+\int^T_0l_f(s)ds\right]<1.$$ Then problem (\ref{p})-(\ref{ci}) has a unique mild solution $u\in PC^2_T$ on $J.$
\end{theorem}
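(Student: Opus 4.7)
Proof proposal: The natural strategy is to apply the Banach contraction principle (Theorem \ref{baft}) to the solution operator $\mathcal{F}:PC_T^2\to PC_T^2$ defined by
\begin{equation*}
(\mathcal{F}u)(t)=\begin{cases}\phi(t),& t\in[-d,0],\\[2pt] S_\alpha(t)\phi(0)+K_\alpha(t)\varphi(0)+\sum_{t_i<t}S_\alpha(t-t_i)I_i(u(t_i^-))\\ \quad +\sum_{t_i<t}K_\alpha(t-t_i)Q_i(u(t_i^-))+\int_0^t T_\alpha(t-s)f(s,u_{\rho(s,u_s)})\,ds,& t\in(0,T],\end{cases}
\end{equation*}
so that a fixed point of $\mathcal{F}$ is, by construction, a mild solution of (\ref{p})-(\ref{ci}) in the sense introduced just above.

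First I would check that $\mathcal{F}$ maps $PC_T^2$ into itself. This relies on the strong continuity of the operator functions $S_\alpha, K_\alpha, T_\alpha$ supplied by Lemma 2.1, together with continuity of $f, \rho, I_k, Q_k$; the prescribed jumps of $\mathcal{F}u$ at each $t_k$ built into the sums match the impulsive conditions (\ref{ci}) exactly, so $\mathcal{F}u$ lies in the piecewise regularity class $PC_T^2$.

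Next I would establish contractivity. Take $u,v\in PC_T^2$ which agree with the prescribed history on $[-d,0]$, so that the $S_\alpha(t)\phi(0)$ and $K_\alpha(t)\varphi(0)$ summands cancel. On a generic subinterval $(t_k,t_{k+1}]$, split $(\mathcal{F}u)(t)-(\mathcal{F}v)(t)$ into the two impulse sums and the convolution integral, apply the uniform bound $\|S_\alpha(t)\|,\|K_\alpha(t)\|,\|T_\alpha(t)\|\leq M$, and invoke the Lipschitz hypotheses $(H_1)$ and $(H_2)$. The triangle inequality then produces
\begin{equation*}
\|(\mathcal{F}u)(t)-(\mathcal{F}v)(t)\|_X \leq M\Bigl[m\|l_i\|_{L^1(J,\mathbb{R}^+)}+m\|l_j\|_{L^1(J,\mathbb{R}^+)}+\int_0^T l_f(s)\,ds\Bigr]\|u-v\|_{PC_T^2} = \Delta\|u-v\|_{PC_T^2}.
\end{equation*}
Taking the supremum over $t\in[-d,T]$ yields $\|\mathcal{F}u-\mathcal{F}v\|_{PC_T^2}\leq\Delta\|u-v\|_{PC_T^2}$, and since $\Delta<1$ the Banach principle delivers the unique fixed point, which is the unique mild solution on $J$.

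The main obstacle is controlling the state-dependent history term $\|u_{\rho(s,u_s)}-v_{\rho(s,v_s)}\|_{PC_0}$ arising from $(H_1)$. Because the delay arguments $\rho(s,u_s)$ and $\rho(s,v_s)$ generally differ, this quantity is not immediately dominated by $\|u-v\|_{PC_T^2}$; one must write
\begin{equation*}
\|u(\rho(s,u_s)+\theta)-v(\rho(s,v_s)+\theta)\|_X \leq \|u(\rho(s,u_s)+\theta)-u(\rho(s,v_s)+\theta)\|_X + \|u(\rho(s,v_s)+\theta)-v(\rho(s,v_s)+\theta)\|_X
\end{equation*}
and absorb the first summand via continuity (effectively uniform continuity or Lipschitzness) of $\rho$ combined with continuity of $u$ on $[-d,T]$. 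A secondary technical point is that the $PC_T^2$-norm tracks derivatives up to order two, while the contraction estimate above naturally controls only the zeroth-order term; closing the estimate therefore requires either additional regularity/bounds on the derivatives of $S_\alpha, K_\alpha, T_\alpha$, or an implicit understanding that the relevant contraction argument is to be carried out in the zeroth-order sup-norm on which $\Delta$ is defined.
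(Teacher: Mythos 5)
Your proposal matches the paper's proof: the same fixed-point operator $P$ on $PC_T^2$, the same splitting into the impulse sums and the convolution integral on $[0,t_1]$ and on a generic subinterval $(t_k,t_{k+1}]$, the same uniform bound $M$ on $S_\alpha,K_\alpha,T_\alpha$ combined with $(H_1)$--$(H_2)$, and the Banach fixed point theorem applied with the same constant $\Delta$. The two technical obstacles you flag at the end --- the mismatch between the delayed arguments $\rho(s,u_s)$ and $\rho(s,v_s)$, and the fact that the $PC_T^2$-norm tracks derivatives while the estimate controls only the zeroth-order sup-norm --- are genuine, but the paper's own proof does not address them either (it literally writes $\|f(s,u_{\rho(s,u_s)})-f(s,u_{\rho(s,u_s)})\|_X$ for the difference and then invokes $(H_1)$), so on these points your write-up is, if anything, more candid than the source.
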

\begin{proof} In order to prove the main result, first we convert the problem (\ref{p})-(\ref{ci}) into fixed point problem. Consider the  Banach space $PC_T^2=\left\{u\in PC_t^2:u(0)=\phi(0),u'(0)=\varphi(0)\right\}$ and defined the operator $P:PC_T^2\to PC_T^2$ by
\begin{eqnarray}
 Pu(t)=\begin{cases}S_{\alpha }(t)\phi(0)+K_{\alpha }(t)\varphi(0)+\int_0^tT_{\alpha }(t-s)f(s,u_{\rho(s,u_s)})ds,& t\in [0,t_1]\\
 S_{\alpha }(t)\phi(0)+K_{\alpha }(t)\varphi(0)+\sum _{i=1}^kS_{\alpha }(t-t_i)I_i(u(t_i^-))\\
+\sum _{i=1}^kK_{\alpha }(t-t_i)Q_i(u(t_i^-))+\int_0^tT_{\alpha }(t-s)f(s,u_{\rho(s,u_s)})ds,& t\in (t_k,t_{k+1}].\label{p1eq7}
 \end{cases}
 \end{eqnarray}
It is clear that $u$ is unique mild solution of the problem (\ref{p})-(\ref{ci}) if and only if u is a solution of the operator
equation $Pu=u.$ Let $u, u^*\in PC_T^2,$ for $[0,t_1]$ we have
\begin{eqnarray*}
\|Pu-Pu^*\|_X&\leq&\int_0^t\|T_{\alpha }(t-s)\|_{L(X}\|f(s,u_{\rho(s,u_s)})-f(s,u_{\rho(s,u_s)})\|_Xds,
\end{eqnarray*}
using  the assumptions $(H_1)$ we get
\begin{eqnarray*}
\|Pu-Pu^*\|_{PC_T^2}&\leq & M\left[\int^T_0l_f(s)ds\right]\|u-u^*\|_{PC_T^2}.
\end{eqnarray*}
Now, without lose of generality we consider the subinterval $(t_k,t_{k+1}]$ to prove our result.
\begin{eqnarray*}
\|Pu-Pu^*\|_X&\leq&\sum _{i=1}^k\|S_{\alpha }(t-t_i)\|_{L(X}\|I_i(u(t_i^-))-I_i(u^*(t_i^-))\|_X\\
&&+\sum _{i=1}^k\|K_{\alpha }(t-t_i)\|_{L(X}\|Q_i(u(t_i^-))-Q_i(u^*(t_i^-))\|_X\\
&&+\int_0^t\|T_{\alpha }(t-s)\|_{L(X}\|f(s,u_{\rho(s,u_s)})-f(s,u_{\rho(s,u_s)})\|_Xds
\end{eqnarray*}
again using  the assumptions $(H_1)$ and $(H_2)$ we get
\begin{eqnarray*}
\|Pu-Pu^*\|_{PC_T^2}&\leq & M\left[m\|l_i\|_{L^1(J,\mathbb{R}^+)}+m\|l_j\|_{L^1(J,\mathbb{R}^+)}+\int^T_0l_f(s)ds\right]\|u-u^*\|_{PC_T^2}\\
&\leq & \Delta\|u-u^*\|_{PC_T^2}.
\end{eqnarray*}
Since $\Delta <1,$ which implies that $P$ is contraction map on Banach space. Hence $P$ has a unique fixed point, which is the mild solution of problem (\ref{p})-(\ref{ci}) on $J.$ This completes the proof of the theorem.
\end{proof}

To apply the Krasnoselkii's  fixed point theorem we have following
assumptions.
\begin{enumerate}
\item[$(H_3)$] There exists a function $m_{f}\in L^1(J,\mathbb{R}^+)$ such that
$$\|(f(t,\psi)-f(t,\xi))\|_X\leq m_f(t)\|\psi\|_{PC_0}\;\mbox{for every}\; \psi\in PC_0.$$
\item[$(H_4)$] The functions $I_k,J_k:X\to X,$ are continuous and
there exist positive constants $C_{i},C_{j},$ such that
$$\|I_k(x)\|_X\leq C_i;\|Q_k(x)\|_X\leq C_j,$$
for all $x\in X$ and $k=1,\dots,m.$
\end{enumerate}

\begin{theorem}\rm\label{Kras}
Let the assumption $(H_1); (H_3);(H_4)$ hold and there exists a constant
$$\Theta=M\int_0^tm_f(s)ds<1.$$ Then problem (\ref{p})-(\ref{ci}) has a mild solution $u\in PC^2_T$ on $J.$
\end{theorem}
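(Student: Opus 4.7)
The plan is to apply Krasnoselskii's fixed point theorem (Theorem \ref{kraski}) to the same operator $P$ used in the proof of Theorem \ref{cnt}, by splitting it into a compact piece carrying the initial data and impulses and a contractive piece carrying the Volterra integral in $f$. Explicitly, I set $P=P_1+P_2$ with
\[
P_1 u(t)=S_\alpha(t)\phi(0)+K_\alpha(t)\varphi(0)+\sum_{t_i<t}S_\alpha(t-t_i)I_i(u(t_i^-))+\sum_{t_i<t}K_\alpha(t-t_i)Q_i(u(t_i^-)),
\]
\[
P_2 u(t)=\int_0^t T_\alpha(t-s)f(s,u_{\rho(s,u_s)})\,ds,
\]
and I work on the closed convex ball $B_r=\{u\in PC_T^2:\|u\|_{PC_T^2}\leq r\}$ for an $r>0$ to be fixed.

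For condition (i), combining $\|S_\alpha\|,\|K_\alpha\|,\|T_\alpha\|\leq M$ with the bounds $\|I_k\|\leq C_i,\|Q_k\|\leq C_j$ supplied by $(H_4)$ and the growth estimate extracted from $(H_3)$ (through the choice $\xi=0$, which gives $\|f(t,\psi)\|_X\leq m_f(t)\|\psi\|_{PC_0}+\|f(t,0)\|_X$), I would bound $\|P_1u(t)+P_2v(t)\|_X$ for $u,v\in B_r$ by an expression of the form
\[
M\bigl(\|\phi(0)\|_X+\|\varphi(0)\|_X\bigr)+Mm(C_i+C_j)+Mr\int_0^T m_f(s)\,ds+M\int_0^T\|f(s,0)\|_X\,ds,
\]
which, since $\Theta=M\int_0^T m_f(s)\,ds<1$, is dominated by $r$ for $r$ large enough. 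Condition (iii), the contraction property of $P_2$, then follows from $(H_1)$: a pointwise estimate gives
\[
\|P_2u-P_2u^*\|_{PC_T^2}\leq M\int_0^T l_f(s)\,ds\;\|u-u^*\|_{PC_T^2},
\]
and under the hypothesis $\Theta<1$ (read consistently with the Lipschitz datum of $(H_3)/(H_1)$ as used in the theorem's statement) the coefficient is strictly less than one.

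The substantive work lies in condition (ii), namely showing that $P_1$ is completely continuous on $B_r$. Continuity of $P_1$ is immediate from continuity of $I_k,Q_k$ and strong continuity of $S_\alpha$, $K_\alpha$, together with continuity of the evaluation $u\mapsto u(t_i^-)$ on $PC_T^2$. For relative compactness of $P_1(B_r)$ I would argue Arzel\`a--Ascoli on each closed subinterval $[0,t_1],(t_1,t_2],\ldots,(t_m,T]$ separately: uniform boundedness follows from $(H_4)$, and equicontinuity is a consequence of the strong-continuity estimates
\[
\|S_\alpha(\tau_2-t_i)-S_\alpha(\tau_1-t_i)\|_{L(X)}\to 0,\qquad \|K_\alpha(\tau_2-t_i)-K_\alpha(\tau_1-t_i)\|_{L(X)}\to 0
\]
as $\tau_2\to\tau_1$. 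The step I expect to be the main obstacle is pointwise precompactness of $\{P_1u(t):u\in B_r\}\subset X$, since mere boundedness in an infinite-dimensional $X$ does not suffice; closing this gap typically forces one to assume compactness of the operators $S_\alpha(t-t_i),K_\alpha(t-t_i)$ for $t>t_i$ (a tacit condition on the sectorial operator $A$) or else to replace Arzel\`a--Ascoli by a Hausdorff measure-of-noncompactness argument on the impulse sum. Once both $P_1$ and $P_2$ meet their respective Krasnoselskii hypotheses on $B_r$, the theorem yields a fixed point $u=P_1u+P_2u\in B_r$, which is precisely the desired mild solution of (\ref{p})--(\ref{ci}).
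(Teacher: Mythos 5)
Your proposal follows essentially the same route as the paper: the identical splitting $P=P_1+P_2$ (initial data and impulses versus the Volterra integral), the same ball $\mathcal{B}_r$ with $r\geq M[\|\phi(0)\|+\|\varphi(0)\|+mC_i+mC_j]/(1-\Theta)$, continuity and Arzel\`a--Ascoli for $P_1$, and the contraction estimate for $P_2$ with constant $\Theta<1$. The caveat you raise is well placed: the paper's own proof also invokes Arzel\`a--Ascoli after establishing only uniform boundedness and equicontinuity, without addressing pointwise precompactness of $\{P_1u(t):u\in\mathcal{B}_r\}$ in the infinite-dimensional space $X$ (which would require, e.g., compactness of $S_\alpha(t)$ and $K_\alpha(t)$ for $t>0$); the only other divergence is cosmetic, namely that the paper runs the $P_2$-contraction with $m_f$ from $(H_3)$ (read as a Lipschitz datum, matching its definition of $\Theta$) where you use $l_f$ from $(H_1)$.
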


\begin{proof}
Let us choose a number $r$ such that
$$r\ge \frac{M[\|\phi(0)\|+\|\varphi(0)\|+mC_i+mC_j]}{1-M\int_0^tm_f(s)ds}$$
Define a set $\mathcal{B}_r=\{u\in PC_T^2:\|u\|_{PC_T^2}\leq r\}$ which is bounded, closed and convex subset $PC_T^2.$ Consider the operators $P_1:\mathcal{B}_r\to \mathcal{B}_r$ and $P_2:\mathcal{B}_r\to \mathcal{B}_r$ for the interval $(t_k,t_{k+1}]$ by
\begin{eqnarray*}
 P_1u(t)&=& S_{\alpha }(t)\phi(0)+K_{\alpha }(t)\varphi(0)+\sum _{i=1}^kS_{\alpha }(t-t_i)I_i(u(t_i^-))
+\sum _{i=1}^kK_{\alpha }(t-t_i)Q_i(u(t_i^-)).\label{p1eq7}\\
 P_2u(t)&=&\int_0^tT_{\alpha }(t-s)f(s,u_{\rho(s,u_s)})ds.\label{p1eq7}
 \end{eqnarray*}
Let $u, u^*\in \mathcal{B}_r,$ we have
\begin{eqnarray*}
\|P_1u(t)+P_2u^*(t)\|_X&\leq& \|S_{\alpha }(t)\|_{L(X}\|\phi(0)\|+\|K_{\alpha }(t)\|_{L(X}\|\varphi(0)\|\\
&&+\sum _{i=1}^k\|S_{\alpha }(t-t_i)\|_{L(X}\|I_i(u(t_i^-))\|\\
&&+\sum _{i=1}^k\|K_{\alpha }(t-t_i)\|_{L(X}\|Q_i(u(t_i^-))\|\\
&&+\int_0^t\|T_{\alpha }(t-s)\|_{L(X}\|f(s,u*_{\rho(s,u^*_s)})\|ds
\end{eqnarray*}
using  the assumptions $(H_1); (H_3)$ and $(H_4)$ we get
\begin{eqnarray*}
\|P_1u(t)+P_2u^*(t)\|_{\mathcal{B}_r}&\leq&
M\left[\|\phi(0)\|+\|\varphi(0)\|+mC_i+mC_j+
r\int_0^tm_f(s)ds\right]\\
&\leq&\frac{M[\|\phi(0)\|+\|\varphi(0)\|+mC_i+mC_j]}{1-M\int_0^tm_f(s)ds}\le r
 \end{eqnarray*}
It is obvious that $\mathcal{B}_r$
is closed with respect to the maps $P_1$ and $P_2.$

Now, we show that $P_1$ is continuous and compact map. Consider a sequence
$\{u^n\}_{n=1}^\infty$  in
 $\mathcal{B}_r$ with $\lim u^n\to u$ in $\mathcal{B}_r.$ Then
\begin{eqnarray*}
\|P_1u^n(t)-P_1u(t)\|_X&\leq&\sum _{i=1}^k\|S_{\alpha }(t-t_i)\|_{L(X)}\|I_i(u^n(t_i^-))-I_i(u(t_i^-))\|_X\\
&&+\sum _{i=1}^k\|K_{\alpha }(t-t_i)\|_{L(X)}\|Q_i(u^n(t_i^-))-Q_i(u(t_i^-))\|_X.
 \end{eqnarray*}
Since the functions $I_i,Q_i\; i=1,2,\dots,m,$ are continuous,
hence $\lim_{n\to\infty}P_1u^n=P_1u$ in $\mathcal{B}_r,$ which show that $P_1$ is continuous map on $\mathcal{B}_r.$

 Let $u\in \mathcal{B}_r,$ we have
\begin{eqnarray*}
\|P_1u(t)\|_X&\leq& \|S_{\alpha }(t)\|_{L(X}\|\phi(0)\|+\|K_{\alpha }(t)\|_{L(X}\|\varphi(0)\|+\sum _{i=1}^k\|S_{\alpha }(t-t_i)\|_{L(X}\|I_i(u(t_i^-))\|\\
&&+\sum _{i=1}^k\|K_{\alpha }(t-t_i)\|_{L(X}\|Q_i(u(t_i^-))\|
\end{eqnarray*}
using  the assumptions $(H_4)$ we get
\begin{eqnarray*}
\|P_1u(t)\|_{\mathcal{B}_r}&\leq&
M\left[\|\phi(0)\|+\|\varphi(0)\|+mC_i+mC_j\right]=\mathcal{C}^*.
 \end{eqnarray*}
It proves that $P_1$ maps bounded set into  bounded sets in $\mathcal{B}_r.$
Consider $P_1(\mathcal{B}_r)=\{P_1u:u\in \mathcal{B}_r\}$ is an equi-continuous family of functions. Next, we show that $P_1$ maps bounded set
 into equi-continuous sets in $P_1(\mathcal{B}_r).$ Let $l_1,l_2 \in (t_i,t_{i+1}],t_i\leq l_1 <l_2 \leq t_{i+1},i=0,1,\dots,m,$ then we have
\begin{eqnarray*}
\|Pu(l_2)-Pu(l_1)\|_X&\leq& \|S_{\alpha }(l_2)-S_{\alpha }(l_1)\|_{L(X}\|\phi(0)\|_{PC_0}+\|K_{\alpha }(l_2)-K_{\alpha }(l_1)\|_{L(X}\|\varphi(0)\|_{PC_0}\\
&&+\sum _{i=1}^k\|S_{\alpha }(l_2-t_i)-S_{\alpha }(l_1-t_i)\|_{L(X}\|I_i(u(t_i^-))\|_X\\
&&+\sum _{i=1}^k\|K_{\alpha }(l_2-t_i)-K_{\alpha }(l_1-t_i)\|_{L(X}\|Q_i(u(t_i^-))\|_X\\
&\leq& \|S_{\alpha }(l_2)-S_{\alpha }(l_1)\|_{L(X}\|\phi(0)\|_{PC_0}+\|K_{\alpha }(l_2)-K_{\alpha }(l_1)\|_{L(X}\|\varphi(0)\|_{PC_0}\\
&&+\sum _{i=1}^k\|S_{\alpha }(l_2-t_i)-S_{\alpha }(l_1-t_i)\|_{L(X}\|I_i(u(t_i^-))\|_X\\
&&+\sum _{i=1}^k\|K_{\alpha }(l_2-t_i)-K_{\alpha }(l_1-t_i)\|_{L(X}\|Q_i(u(t_i^-))\|_X
 \end{eqnarray*}
Since $K_{\alpha }(t),S_{\alpha }(t)$ are strongly continuous functions so
 $K_{\alpha }(l_2)-K_{\alpha }(l_1);S_{\alpha }(l_2)-S_{\alpha }(l_1)\to 0$ as $l_1\to l_2,$ which implies that $\|P_1u(l_2)-P_1u(l_1)\|_X\to 0$ as $l_1\to l_2.$ Hence $P(\mathcal{B}_r)$ is equi-continuous.
 We conclude that the operator $P_1$ is compact by Arzela Ascoli's theorem.

 Let $u, u^*\in \mathcal{B}_r,$ we have the following step
 \begin{eqnarray*}
 \|P_2u(t)-P_2u^*(t)\|&\le &\int_0^tT_{\alpha }(t-s)\|f(s,u_{\rho(s,u_s)})-f(s,u^*_{\rho(s,u^*_s)})\|ds\\
 &\le &M\int_0^tm_f(s)ds\|u-u^*\|\\
 &= & \Theta\|u-u^*\|
 \end{eqnarray*}
 Since $\Theta < 1,$ it implies that $P_2$ is a contraction map.
 By Krasnoselkii's theorem {} set $\mathcal{B}_r$ has a fixed
point $u(t)$ which is the solution of system (\ref{p})-(\ref{ci}). This completes the
proof of the theorem.
 \end{proof}

To use Nonlinear Lerey-Schauder Alternative fixed point theorem we have following assumptions.
\begin{enumerate}
\item[$(H_3)$] The functions $I_k,J_k:X\to X,$ are continuous and
there exists positive constants $C_{i},C_{j},$ such that
$$\|I_k(x)\|_X\leq C_i;\|Q_k(x)\|_X\leq C_j,$$ for all $x\in X$ and $k=1,\dots,m.$
\item[$(H_4)$] There exist $m_f\in C(J, [0,\infty))$ and a continuous non-decreasing function $\Omega_f : [0,\infty)\to(0,\infty)$ such that
$$\|f(t,\varphi)\|\leq m_f(t)\Omega_f (\|\varphi\|),\;\forall \;(t,\varphi)\in J \times PC_0.$$
\end{enumerate}

\begin{theorem}\rm\label{leray}
Let the assumption $(H_3)$ and $(H_4)$ hold and
\begin{eqnarray*}
M\int_0^T m_f(s)ds<\int^{\infty}_{C'}\frac{1}{\Omega_f (s)}ds,
 \end{eqnarray*}
where $C'=M[\|\phi(0)\|+\|\varphi(0)\|+mC_i+mC_j],$ then there exists a mild solution of problem (\ref{p})-(\ref{ci}) on $J.$
\end{theorem}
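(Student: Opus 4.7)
The plan is to apply the Nonlinear Leray--Schauder Alternative (Theorem~\ref{laray}) to the operator $P:PC_T^2\to PC_T^2$ defined in the proof of Theorem~\ref{Kras}. First I would verify that $P$ is completely continuous on $PC_T^2$. Continuity in $u$ follows from $(H_3)$ (continuity of $I_k,Q_k$), from the continuity of $f$ and $\rho$, and dominated convergence with $m_f(s)\Omega_f(r)$ as a majorant on each bounded set. Compactness on bounded sets is established exactly as for $P_1$ in Theorem~\ref{Kras}: uniform boundedness of the impulse part comes from $(H_3)$, equicontinuity from the strong continuity of $S_\alpha$ and $K_\alpha$, and the convolution term $\int_0^t T_\alpha(t-s)f(s,u_{\rho(s,u_s)})ds$ is handled by $\|T_\alpha\|\le M$ combined with $(H_4)$, giving both boundedness and equicontinuity. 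Arzel\`a--Ascoli then yields that $P$ maps bounded sets to relatively compact sets.

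\textbf{A priori bound via Bihari.} The crucial step is the boundedness of the Leray--Schauder set $E=\{u\in PC_T^2:u=\lambda Pu\text{ for some }\lambda\in(0,1)\}$. For $u\in E$ and $t\in(t_k,t_{k+1}]$, using $\|S_\alpha\|,\|K_\alpha\|,\|T_\alpha\|\le M$ together with $(H_3)$ and $(H_4)$ I obtain
\[
\|u(t)\|_X\le C'+M\int_0^t m_f(s)\,\Omega_f\!\left(\|u_{\rho(s,u_s)}\|_{PC_0}\right)ds,
\]
with $C'=M[\|\phi(0)\|+\|\varphi(0)\|+mC_i+mC_j]$. Setting $\mu(t)=\sup_{\tau\in[-d,t]}\|u(\tau)\|_X$ and invoking monotonicity of $\Omega_f$ gives $\mu(t)\le C'+M\int_0^t m_f(s)\,\Omega_f(\mu(s))ds$. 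Define $v(t)$ to equal the right-hand side; then $v$ is non-decreasing, $\mu(t)\le v(t)$, and $v'(t)=Mm_f(t)\Omega_f(\mu(t))\le Mm_f(t)\Omega_f(v(t))$. Separating variables and integrating from $0$ to $T$,
\[
\int_{C'}^{v(T)}\frac{ds}{\Omega_f(s)}\le M\int_0^T m_f(s)ds<\int_{C'}^{\infty}\frac{ds}{\Omega_f(s)},
\]
so there exists a constant $N$, independent of $\lambda$ and $u$, with $v(T)\le N$; hence $\|u\|_{PC_T^2}\le N$ uniformly for $u\in E$.

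\textbf{Conclusion and main obstacle.} With $E$ bounded, the Leray--Schauder alternative applied to $P$ restricted to the closed convex set $D=\{u\in PC_T^2:\|u\|_{PC_T^2}\le N+1\}\ni 0$ forces $P$ to have a fixed point in $D$; this fixed point is the desired mild solution of (\ref{p})--(\ref{ci}) on $J$. The main obstacle I anticipate is the state-dependent delay inside the Bihari step: since a priori $\rho(s,u_s)\in[-d,T]$, one must justify that $\|u_{\rho(s,u_s)}\|_{PC_0}$ is majorized by $\sup_{\tau\in[-d,s]}\|u(\tau)\|_X$. This is immediate under the standard hypothesis $\rho(s,u_s)\le s$ (implicit in the retarded nature of the problem); otherwise one replaces $\mu(t)$ by the cruder $\sup_{\tau\in[-d,T]}\|u(\tau)\|_X$ and runs the same scalar comparison after a small bookkeeping adjustment, which still yields a finite a priori bound from the given integral inequality.
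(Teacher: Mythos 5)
Your proposal is correct and follows essentially the same route as the paper: show $P$ is completely continuous via Arzel\`a--Ascoli, derive the a priori bound for solutions of $u=\lambda Pu$ by separating variables in the integral inequality, and invoke the Leray--Schauder alternative. In fact your Bihari step is executed more carefully than the paper's (which differentiates $\beta_\lambda(t)=\|u\|_{PC_T^2}$, a quantity that is constant in $t$, whereas you correctly introduce the nondecreasing majorant $v(t)$ equal to the right-hand side), and your remark about needing $\rho(s,u_s)\le s$ to control the state-dependent delay addresses a point the paper silently glosses over.
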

\begin{proof}
Consider the operator $P:PC_T^2\to PC_T^2$ defined as in Theorem
\ref{cnt}. Now, we show that $P$ is completely continuous map for
general subinterval $(t_k,t_{k+1}]$. For this purpose firstly, let
$\{u^n\}_{n=1}^\infty$ be a sequence in
 $PC_T^2$ with $\lim u^n\to u$ in $PC_T^2.$
\begin{eqnarray*}
\|Pu^n(t)-Pu(t)\|_X&\leq&\sum _{i=1}^k\|S_{\alpha }(t-t_i)\|_{L(X)}\|I_i(u^n(t_i^-))-I_i(u(t_i^-))\|_X\\
&&+\sum _{i=1}^k\|K_{\alpha }(t-t_i)\|_{L(X)}\|Q_i(u^n(t_i^-))-Q_i(u(t_i^-))\|_X\\
&&+\int_0^t\|T_{\alpha }(t-s)\|_{L(X)}\|f(s,u^n_{\rho(s,u^n_s)})-f(s,u_{\rho(s,u_s)})\|_Xds.
 \end{eqnarray*}
Since the functions $f,I_i,Q_i\; i=1,2,\dots,m,$ are continuous,
hence $\lim_{n\to\infty}Pu^n=Pu$ in $PC_T^2,$ which implies that the
map $P$ is continuous on $PC_T^2.$ Consider set
$\mathcal{B}_r=\{u\in P:PC_T^2:\|u\|_{PC_T^2}\leq r\}.$ It clear
that $\mathcal{B}_r$  is a bounded, closed convex subset in
$PC_T^2.$ Let $u\in \mathcal{B}_r,$ we have
\begin{eqnarray*}
\|Pu(t)\|_X&\leq& \|S_{\alpha }(t)\|_{L(X}\|\phi(0)\|+\|K_{\alpha }(t)\|_{L(X}\|\varphi(0)\|+\sum _{i=1}^k\|S_{\alpha }(t-t_i)\|_{L(X}\|I_i(u(t_i^-))\|\\
&&+\sum _{i=1}^k\|K_{\alpha }(t-t_i)\|_{L(X}\|Q_i(u(t_i^-))\|+\int_0^t\|T_{\alpha }(t-s)\|_{L(X}\|f(s,u_{\rho(s,u_s)})\|ds
\end{eqnarray*}
using  the assumptions $(H_3)$ and $(H_4)$ we get
\begin{eqnarray*}
\|Pu(t)\|_{\mathcal{B}_r}&\leq&
M\left[\|\phi(0)\|+\|\varphi(0)\|+mC_i+mC_j+\Omega_f
(r)\int_0^tm_f(s)ds\right]=C^*.
 \end{eqnarray*}
It proves that $P$ maps bounded set into  bounded sets in $\mathcal{B}_r$  for all sub interval $t \in (t_i,t_{i+1}],i=1,2,..,m.$
Consider $P(\mathcal{B}_r)=\{Pu:u\in \mathcal{B}_r\}$ is an equi-continuous family of functions. Next, we show that $P$ maps bounded set
 into equi-continuous sets in $P(\mathcal{B}_r).$ Let $l_1,l_2 \in (t_i,t_{i+1}],t_i\leq l_1 <l_2 \leq t_{i+1},i=0,1,\cdots,m,$ then we have
\begin{eqnarray*}
\|Pu(l_2)-Pu(l_1)\|_X&\leq& \|S_{\alpha }(l_2)-S_{\alpha }(l_1)\|_{L(X}\|\phi(0)\|_{PC_0}+\|K_{\alpha }(l_2)-K_{\alpha }(l_1)\|_{L(X}\|\varphi(0)\|_{PC_0}\\
&&+\sum _{i=1}^k\|S_{\alpha }(l_2-t_i)-S_{\alpha }(l_1-t_i)\|_{L(X}\|I_i(u(t_i^-))\|_X\\
&&+\sum _{i=1}^k\|K_{\alpha }(l_2-t_i)-K_{\alpha }(l_1-t_i)\|_{L(X}\|Q_i(u(t_i^-))\|_X\\
&&+\int_0^t\|T_{\alpha }(l_2-s)-T_{\alpha }(l_1-s)\|_{L(X}\|f(s,u_{\rho(s,u_s)})\|_Xds\\
&&+\int_{l_1}^{l_2}\|T_{\alpha }(l_2-s)\|_{L(X}\|f(s,u_{\rho(s,u_s)})\|_Xds\\
&\leq& \|S_{\alpha }(l_2)-S_{\alpha }(l_1)\|_{L(X}\|\phi(0)\|_{PC_0}+\|K_{\alpha }(l_2)-K_{\alpha }(l_1)\|_{L(X}\|\varphi(0)\|_{PC_0}\\
&&+\sum _{i=1}^k\|S_{\alpha }(l_2-t_i)-S_{\alpha }(l_1-t_i)\|_{L(X}\|I_i(u(t_i^-))\|_X\\
&&+\sum _{i=1}^k\|K_{\alpha }(l_2-t_i)-K_{\alpha }(l_1-t_i)\|_{L(X}\|Q_i(u(t_i^-))\|_X\\
&&+\int_0^t\|T_{\alpha }(l_2-s)-T_{\alpha }(l_1-s)\|_{L(X}\|f(s,u_{\rho(s,u_s)})\|_Xds\\
&&+(l_2-l_1)M\|f(s,u_{\rho(s,u_s)})\|_X.
 \end{eqnarray*}
Since $T_{\alpha }(t),K_{\alpha }(t),S_{\alpha }(t)$ are strongly continuous functions so
 $T_{\alpha }(l_2)-T_{\alpha }(l_1);K_{\alpha }(l_2)-K_{\alpha }(l_1);S_{\alpha }(l_2)-S_{\alpha }(l_1)\to 0$ as $l_1\to l_2,$ which implies that $\|Pu(l_2)-Pu(l_1)\|_X\to 0$ as $l_1\to l_2.$ Hence $P(\mathcal{B}_r)$ is equi-continuous.

Finally, we establish  a priori estimate for the solutions of the
integral equation $u = \lambda Pu$ for $\lambda\in (0, 1).$ Let $u$
be a solution of $z = \lambda Pz,\lambda\in (0, 1),$ then we have
\begin{eqnarray*}
\|u(t)\|_X&\leq& \|S_{\alpha }(t)\|_{L(X}\|\phi(0)\|_{PC_0}+\|K_{\alpha }(t)\|_{L(X}\|\varphi(0)\|_{PC_0}+\sum _{i=1}^k\|S_{\alpha }(t-t_i)\|_{L(X}\|I_i(u(t_i^-))\|_X\\
&&+\sum _{i=1}^k\|K_{\alpha }(t-t_i)\|_{L(X}\|Q_i(u(t_i^-))\|+\int_0^t\|T_{\alpha }(t-s)\|_{L(X}\|f(s,u_{\rho(s,u_s)})\|_Xds\\
&&\leq M\left[\|\phi(0)\|_{PC_0}+\|\varphi(0)\|_{PC_0}+mC_i+mC_j+\int_0^tm_f(s)\Omega_f (\|u_{\rho(s,u_s)}\|)ds\right]
\end{eqnarray*}
again using  the assumptions $(H_1)$ and $(H_2)$ we get
\begin{eqnarray*}
\|u(t)\|_{PC^2_T}&\leq& M\left[\|\phi(0)\|_{PC_0}+\|\varphi(0)\|_{PC_0}+mC_i+mC_j+\int_0^tm_f(s)\Omega_f (\|u\|_{PC^2_T})ds\right].
 \end{eqnarray*}
If $\beta_{\lambda}(t)=\|u\|_{PC^2_T},$ we get that
\begin{eqnarray*}
\beta_{\lambda}(t)&\leq&M\left[\|\phi(0)\|_{PC_0}+\|\varphi(0)\|_{PC_0}+mC_i+mC_j+\int_0^tm_f(s)\Omega_f (\beta_{\lambda}(t))ds\right].
 \end{eqnarray*}
Then, we get
\begin{eqnarray*}
\beta_{\lambda}'(t)&\leq&Mm_f(t)\Omega_f (\beta_{\lambda}(t)),
 \end{eqnarray*}
and hence
\begin{eqnarray*}
\int^{\beta_{\lambda}(t)}_{C'=\beta(0)}\frac{1}{\Omega_f (s)}ds\leq M\int_0^T m_f(s)ds.
 \end{eqnarray*}
It is clear that set of functions $\{\beta_{\lambda}:\lambda \in
(0,1)\}$ is bounded, which implies that $\{u:\lambda \in (0,1)\}$ is
bounded. By nonlinear alternative of Leray-Schauder fixed theorem,
we deduce that $P$ has a fixed point $u,$ which is a mild solution
of the problem (\ref{p})-(\ref{ci}) on $J.$
\end{proof}

\section{Application}
Consider the following impulsive fractional partial differential equation of the form
\begin{eqnarray}
\label{ex1}{\partial^\alpha \over\partial
t^\alpha}u(t,x)&=&{\partial^2\over\partial y^2}u(t,x)+\frac{u(t-\sigma_1(t)\sigma_2(\|u\|),x)}{49},t\neq\frac{1}{2},\\
u(t,0)&=&u(t,\pi)=0;u'(t,0)=u'(t,\pi)=0\;\;t\geq 0, \nonumber\\
u(t,x)&=&\phi(t,x),u'(t,x)=\varphi(t,x),t\in[-d,0],x\in[0,\pi],\nonumber\\
\label{ex2}\Delta
u|_{t=\frac{1}{2}}&=&\frac{\|u(\frac{1^-}{2})\|}{25+\|u(\frac{1^-}{2})\|},\;\Delta
u'|_{t=\frac{1}{2}}=\frac{\|u(\frac{1^-}{2})\|}{16+\|u(\frac{1^-}{2})\|},
\nonumber
\end{eqnarray}
where ${\partial^\alpha\over\partial t^\alpha}$ is Caputo's
fractional derivative of order $\alpha\in(1,2),0<t_1<t_2<\cdots<t_n<T$ are prefixed numbers and $\phi,\varphi\in PC_0.$
Let $X=L^2[0,\pi]$ and define the operator
$A:D(A)\subset X\to X$ by $Aw=w''$ with the domain $D(A):=\{w\in X:
\;w,w'$ are absolutely continuous, $w''\in X, w(0)=0=w(\pi)\}.$ Then
$$Aw=\sum_{n=1}^{\infty} n^2(w,w_n)w_n,\;w\in D(A),$$ where
$w_n(x)=\sqrt{2\over\pi}\sin (nx),\; n\in\mathbb{N}$ is the
orthogonal set of eigenvectors of $A$.
It is well known that A is the infinitesimal
generator of an analytic semigroup $(T (t))_{t \geq 0}$ in $X$ and is given by
$$T (t)\omega =\sum_{n=1}^\infty e^{-n^2t  }(\omega,\omega_n)\omega_n,\; \mbox{for all}\; \omega \in X,\; \mbox{and every}\; t > 0.$$
 We assume that $\rho_i:[0,\infty)\to[0,\infty),\;i=1,2,$ are
continuous functions.

Set $u(t)(x)=u(t,x),$ and $\rho(t,\phi)=\rho_1(t)\rho_2(\|\phi(0)\|)$ we have
\begin{eqnarray*}
f(t,\phi)(x)=\frac{\phi}{49},I_{k}(u)=\frac{\|u\|}{25+\|u\|},J_{k}(u)=\frac{\|u\|}{16+\|u\|},
\end{eqnarray*}
then with these settings the problem (\ref{ex1})-(\ref{ex2}) can be
written in the abstract form of equation (\ref{p})-(\ref{ci}). It is
obvious that the maps $f,I_k,J_k$ follow the assumption $ H_1,H_2$,
this implies that there exists a unique mild solution of problem
(\ref{ex1})-(\ref{ex2}) on $[0,T].$

\end{document}